\newtheorem{thm}{Theorem}[section]
\newtheorem{lem}[thm]{Lemma}
\newtheorem{assum}[thm]{Assumption}
\theoremstyle{definition}
\theoremstyle{remark}
\newtheorem{rem}{Remark}[section]
\newtheorem{defn}{Definition}
\newtheorem{exam}[rem]{Example}
\numberwithin{equation}{section}
\begin{document}

\title[Heat equation with singular thermal conductivity]{Heat equation with singular thermal conductivity}

\author[M. Ruzhansky]{Michael Ruzhansky}
\address{
	Michael Ruzhansky:
	 \endgraf
  Department of Mathematics: Analysis, Logic and Discrete Mathematics
  \endgraf
  Ghent University, Krijgslaan 281, Building S8, B 9000 Ghent
  \endgraf
  Belgium
  \endgraf
  and
  \endgraf
  School of Mathematical Sciences
    \endgraf
    Queen Mary University of London
  \endgraf
  United Kingdom
	\endgraf
  {\it E-mail address} {\rm michael.ruzhansky@ugent.be}
}

\author[M. Sebih]{Mohammed Elamine Sebih}
\address{
  Mohammed Elamine Sebih:
  \endgraf
  Laboratory of Geomatics, Ecology and Environment (LGEO2E)
  \endgraf
  University Mustapha Stambouli of Mascara, 29000 Mascara
  \endgraf
  Algeria
  \endgraf
  {\it E-mail address} {\rm sebihmed@gmail.com, ma.sebih@univ-mascara.dz}
}

\author[N. Tokmagambetov ]{Niyaz Tokmagambetov }
\address{
  Niyaz Tokmagambetov:
  \endgraf 
  Centre de Recerca Matem\'atica
  \endgraf
  Edifici C, Campus Bellaterra, 08193 Bellaterra (Barcelona), Spain
  \endgraf
  and
  \endgraf   
  Institute of Mathematics and Mathematical Modeling
  \endgraf
  125 Pushkin str., 050010 Almaty, Kazakhstan
  \endgraf  
  {\it E-mail address:} {\rm tokmagambetov@crm.cat; tokmagambetov@math.kz}
  }

\thanks{This research was partly funded by the Committee of Science of the Ministry of Science and Higher Education of the Republic of Kazakhstan (Grant No. AP14872042). The authors are supported by the FWO Odysseus 1 grant G.0H94.18N: Analysis and Partial Differential Equations and by the Methusalem programme of the Ghent University Special Research Fund (BOF) (Grant number 01M01021). Michael Ruzhansky is also supported by EPSRC grant
EP/R003025/2.}

\keywords{Heat equation, Cauchy problem, weak solution, non-homogeneous medium, effective thermal conductivity, singular thermal conductivity.}
\subjclass[2010]{35L81, 35L05, 	35D30, 35A35.}

\begin{abstract}
In this paper, we study the heat equation with an irregular spatially dependent thermal conductivity coefficient. We prove that it has a solution in an appropriate very weak sense. Moreover, the uniqueness result and consistency with the classical solution if the latter exists are shown. Indeed, we allow the coefficient to be a distribution with a toy example of a $\delta$-function.
\end{abstract}

\maketitle


\section{Introduction}
In the present paper we investigate the Cauchy problem
\begin{equation}
    \left\lbrace
    \begin{array}{l}
    u_{t}(t,x) - \sum_{j=1}^{d}\partial_{x_{j}}\left(h(x)\partial_{x_{j}}u(t,x)\right)=0, \,\,\,(t,x)\in\left[0,T\right]\times \mathbb{R}^{d},\\
    u(0,x)=u_{0}(x), \,\,\, x\in\mathbb{R}^{d}, \label{Equation intro}
    \end{array}
    \right.
\end{equation}
where the coefficient $h$ is assumed to be singular and positive, that is, there exists $h_0 >0$ such that, $0< h_0\leq h$. This kind of problems arise naturally when modelling the heat transfer in nonhomogeneous mediums, see for instance \cite{WZW08}. In such case, the coefficient $h$ stands for the thermal conductivity of the material and $u$ represents the temperature function.

Nonhomogeneous or anisotropic materials are the most existing materials nowadays. They are made up of solid, liquid and gas, for instance porous capillary bodies and cellular systems. For such mediums, mechanical and thermal properties vary in time and position, this is why their thermal conductivity is often expressed as a function of the time, the position and the temperature. See for example \cite{AC08, AC10, AC14, Kam90}, the book \cite{WZW08} and \cite{TAKO96} for spatially dependent thermal conductivity.  

For phase-change problems \cite{LLG98, Vol90, VS93} or when studying the heat conduction in mediums made by different layers or in the presence of cracks \cite{Shi93, TVNDL18}, the properties of the material may change suddenly and it is natural to consider the thermal conductivity to be a non-regular function.

Here, we consider a singular thermal conductivity which depends only on the spatial variable and we study the well-posedness of the Cauchy problem (\ref{Equation intro}).

While from a physical point of view it is natural to formulate such problem, mathematically we can not even pose it in the case of distributional coefficients, in view of the Schwartz impossibility result about multiplication of distributions \cite{Sch54}. In order to provide a solution for this problem and to give a mathematical framework for numerical modelling, the authors in \cite{GR15} introduced the concept of very weak solutions for the analysis of second order hyperbolic equations with irregular coefficients. In order to show usefulness of the concept, the notion of very weak solutions was later applied in a series of papers, either for physical models or abstract mathematical problems \cite{ART19, MRT19, RT17a, RT17b, RY20, SW20}. In these papers, the authors dealt with equations with time-dependent coefficients. In \cite{Gar21, ARST21} 
the authors started using the concept of very weak solutions for equations with coefficients depending on the spatial variable. Recently, the authors in \cite{CRT22a, CRT22b} used the approach of very weak solutions for the heat equation and the fractional Schrödinger equation for general hypoelliptic operators in the setting of graded Lie groups.

Our aim in the present work, is to consider the Cauchy problem (\ref{Equation intro}) and prove that it is very weak well posed, the uniqueness is proved in an appropriate sense. Moreover, we prove that the theory of very weak solutions for our model is consistent with the classical theory.

There are many papers in the literature which are concerned with numerical simulations of the problem of the kind (\ref{Equation intro}), we refer to \cite{AC08, AC10, AC14, LLG98, SPG02, Vol90, VS93} to cite only few. 

Essentially, the present work may be considered as an extension of the study of the well-posedness of the heat equation by incorporating coefficients having strong singularities, distributional or non-distributional. This is possible within the framework of the theory of very weak solutions which gives a meaningful notion of solution when the classical theory fails. On the other hand, the concept of very weak solutions seems to be well adapted to numerical simulations as it was pointed out by the authors in the papers \cite{ART19, MRT19, ARST21} 
for different models. Indeed, we are able to talk about uniqueness of very weak solutions in some sense.

\section{Main results}
For $T>0$ and $d\geq 1$, we consider the Cauchy problem for the divergence form of the heat equation 
\begin{equation}
    \left\lbrace
    \begin{array}{l}
    u_{t}(t,x) - \sum_{j=1}^{d}\partial_{x_{j}}\left(h(x)\partial_{x_{j}}u(t,x)\right)=0, \,\,\,(t,x)\in\left[0,T\right]\times \mathbb{R}^{d},\\
    u(0,x)=u_{0}(x), \,\,\, x\in\mathbb{R}^{d},
    \end{array}
    \right.
\label{Equation singular}
\end{equation}
where $h$ is positive and singular (singular thermal conductivity). Due to the singularities in the equation we introduce a notion of very weak solutions adapted to our problem. Moreover, we prove the existence, uniqueness, and consistency results in some appropriate sense.

\subsection{Existence of a very weak solution}
In this subsection, we want to prove that the equation \ref{Equation singular} has a very weak solution. To start with, we regularise the coefficient $h$ and the Cauchy data $u_0$ by convolution with a suitable mollifier $\psi$, generating families of smooth functions $(h_{\varepsilon})_{\varepsilon}$ and $(u_{0,\varepsilon})_{\varepsilon}$, that is
\begin{equation}
    h_{\varepsilon}(x) = h\ast \psi_{\varepsilon}(x)
\end{equation}
and
\begin{equation}
    u_{0,\varepsilon}(x) = u_0\ast \psi_{\varepsilon}(x),
\end{equation}
where
\begin{equation}
    \psi_{\varepsilon}(x) = \varepsilon^{-1}\psi(x/\varepsilon),\,\,\,\varepsilon\in\left(0,1\right].
\end{equation}
The function $\psi$ is a Friedrichs-mollifier, i.e. $\psi\in C_{0}^{\infty}(\mathbb{R}^{d})$, $\psi\geq 0$ and $\int\psi =1$.

Throughout this paper, we will write $f\lesssim g$ for two functions $f$ and $g$ on the same domain, if there
exists a positive constant $C$ such that $f\leq Cg$. We denote by
\begin{equation*}        
H^{k}(\mathbb{R}^d):=\left\{ f \text{\,\,is\,\,measurable}:\, \Vert f\Vert_{H^{k}}:= \Vert f\Vert_{L^{2}} + \Vert \nabla^{k} f\Vert_{L^{2}} < +\infty \right\},   
\end{equation*}
for each $k\in\mathbb Z_{+}$.
Also, $W^{1,\infty}(\mathbb{R}^d)$ is a Banach space defined by
    \begin{equation*}
        W^{1,\infty}(\mathbb{R}^d):=\left\{ f \text{\,\,is\,\,measurable}:\, \Vert f\Vert_{W^{1,\infty}}:= \Vert f\Vert_{L^{\infty}} + \Vert \nabla f\Vert_{L^{\infty}} < +\infty \right\}.
    \end{equation*}

\begin{assum}\label{assumption}
On the regularisations of the coefficient $h$ and the data $u_0$, we assume that
\begin{equation}
    \inf_{\varepsilon\in (0,1]}\inf_{x\in \mathbb{R}^d}h_{\varepsilon}(x)>0,
\end{equation}
as well as there exist $N_0, N_1\in \mathbb{N}_0$ such that
\begin{equation}
     \Vert h_{\varepsilon}\Vert_{W^{1,\infty}} \lesssim \varepsilon^{-N_0}, \label{assum coeff}
\end{equation}
and
\begin{equation}
    \Vert u_{0,\varepsilon}\Vert_{H^2} \lesssim \varepsilon^{-N_1}. \label{assum data}
\end{equation}
\end{assum}

\begin{rem}
We note that such assumptions are natural for distributions. Indeed, by the structure theorems for distributions (see, e.g. \cite{FJ98}), we know that every compactly supported distribution can be represented by a
finite sum of (distributional) derivatives of continuous functions. Precisely, for $T\in \mathcal{E}'(\mathbb{R}^{d})$ we can find $n\in \mathbb{N}$ and functions $f_{\alpha}\in C(\mathbb{R}^{d})$ such that, $T=\sum_{\vert \alpha\vert \leq n}\partial^{\alpha}f_{\alpha}$. The convolution of $T$ with a mollifier gives
\begin{equation}
    T\ast\psi_{\varepsilon}=\sum_{\vert \alpha\vert \leq n}\partial^{\alpha}f_{\alpha}\ast\psi_{\varepsilon}=\sum_{\vert \alpha\vert \leq n}f_{\alpha}\ast\partial^{\alpha}\psi_{\varepsilon}=\sum_{\vert \alpha\vert \leq n}\varepsilon^{-\vert\alpha\vert}f_{\alpha}\ast\left(\varepsilon^{-1}\partial^{\alpha}\psi(x/\varepsilon)\right).
\end{equation}
Using an appropriate norm, we see that the regularisation of $T$ satisfies the above assumption. Nevertheless, let us give some more examples.
\end{rem}

\begin{exam} \label{example moderateness}
\leavevmode
\begin{itemize}
\item[(1)] Let $f=\delta_{0}$. We have
$f_{\varepsilon}(x) = \varepsilon^{-1}\psi(\varepsilon^{-1}x)\leq C\varepsilon^{-1}.$
\item[(2)] For $f=\delta_{0}^{2}$, we take
$f_{\varepsilon}(x) = \varepsilon^{-2}\psi^{2}(\varepsilon^{-1}x) \leq C\varepsilon^{-2}.$
\end{itemize}
\end{exam}

\begin{defn}[Moderateness] \label{defn:Moderatness}
\leavevmode
\begin{itemize}
    \item[(i)]  A net of functions $(f_{\varepsilon})_{\varepsilon}$, is said to be $W^{1,\infty}$-moderate, if there exist $N\in\mathbb{N}_{0}$ such that
\begin{equation*}
    \Vert f_{\varepsilon}\Vert_{W^{1,\infty}} \lesssim \varepsilon^{-N}.
\end{equation*}
    \item[(ii)] A net of functions $(g_{\varepsilon})_{\varepsilon}$, is said to be $H^2$-moderate, if there exist $N\in\mathbb{N}_{0}$ such that
\begin{equation*}
    \Vert g_{\varepsilon}\Vert_{H^2} \lesssim \varepsilon^{-N}.
\end{equation*}
    \item[(iii)] A net of functions $(u_{\varepsilon})_{\varepsilon}$ from $C([0,T]; H^{2}(\mathbb R^d))$ is said to be $C$-moderate, if there exist $N\in\mathbb{N}_{0}$ such that
\begin{equation*}
    \sup_{t\in[0,T]}\Vert u_{\varepsilon}(t,\cdot)\Vert_{H^2} \lesssim \varepsilon^{-N}.
\end{equation*}
\end{itemize}
\end{defn}

\begin{rem}
We remark that the regularisations of the coefficient $h$ and the Cauchy data $u_0$ are moderate by assumption.
\end{rem}

\begin{defn}[Very weak solution] \label{defn:very weak sol}
The net $(u_{\varepsilon})_{\varepsilon}\in C([0,T]; H^{2}(\mathbb R^d))$ is said to be a very weak solution to the Cauchy problem (\ref{Equation singular}), if there exist a $W^{1,\infty}$-moderate regularisation of the coefficient $h$ and a $H^2$-moderate regularisation of $u_0$, such that the family $(u_{\varepsilon})_{\varepsilon}$ solves the regularized problem
\begin{equation}
    \left\lbrace
    \begin{array}{l}
    \partial_{t}u_{\varepsilon}(t,x) - \sum_{j=1}^{d}\partial_{x_{j}}\left(h_{\varepsilon}(x)\partial_{x_{j}}u_{\varepsilon}(t,x)\right)=0, \,\,\,(t,x)\in\left[0,T\right]\times \mathbb{R}^{d},\\
    u_{\varepsilon}(0,x)=u_{0,\varepsilon}(x), \,\,\, x\in\mathbb{R}^{d}, \label{Equation regularized}
    \end{array}
    \right.
\end{equation}
for all $\varepsilon\in\left(0,1\right]$, and is $C$-moderate.
\end{defn}

\begin{thm}[Existence]
\label{TH: Exist}
Let the thermal conductivity $h$ be positive and assume that the regularisations of $h$ and $u_0$ satisfy the conditions of Assumption \ref{assumption}. Then, the Cauchy problem (\ref{Equation singular}) has a very weak solution.
\end{thm}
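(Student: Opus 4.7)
The plan is to regularise everything, produce a solution family $(u_\varepsilon)_\varepsilon$ of the smooth problem (\ref{Equation regularized}) via classical parabolic theory, and then establish the $C$-moderateness bound by careful energy estimates that avoid Gronwall's exponential blow-up. More precisely, I would first fix $\varepsilon\in(0,1]$ and observe that since $h_\varepsilon\in C^\infty(\R^d)$ is bounded from below uniformly in $x$ by Assumption \ref{assumption}, the operator $L_\varepsilon u:=\sum_j\partial_{x_j}(h_\varepsilon\partial_{x_j}u)$ is uniformly elliptic with smooth coefficients, hence the regularised Cauchy problem (\ref{Equation regularized}) admits a unique solution $u_\varepsilon\in C([0,T];H^2(\R^d))$ by standard parabolic theory (Galerkin approximation or semigroup generated by the self-adjoint operator $-L_\varepsilon$). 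The whole point of the theorem is then to bound this solution polynomially in $\varepsilon^{-1}$.

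For the $L^2$ estimate I would multiply the equation by $u_\varepsilon$ and integrate by parts; the positivity of $h_\varepsilon$ yields the dissipation $-\int h_\varepsilon|\nabla u_\varepsilon|^2$ and therefore the monotone bound $\|u_\varepsilon(t)\|_{L^2}\le\|u_{0,\varepsilon}\|_{L^2}\lesssim\varepsilon^{-N_1}$. The delicate step is the $H^1$ bound, because testing the equation against $-\Delta u_\varepsilon$ produces the term $\int(\nabla h_\varepsilon\cdot\nabla u_\varepsilon)\Delta u_\varepsilon$ which, controlled crudely by $\|\nabla h_\varepsilon\|_\infty\|\nabla u_\varepsilon\|\|\Delta u_\varepsilon\|$, would give Gronwall-exponential growth $\exp(C\varepsilon^{-2N_0})$ and destroy moderateness. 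The remedy is to split Cauchy-Schwarz into $\tfrac{h_0}{2}\|\Delta u_\varepsilon\|^2+\tfrac{1}{2h_0}\|\nabla h_\varepsilon\|_\infty^2\|\nabla u_\varepsilon\|^2$, and then use the interpolation identity $\|\nabla u_\varepsilon\|^2=-\int u_\varepsilon\Delta u_\varepsilon$ together with Young to replace $\|\nabla u_\varepsilon\|^2$ on the right by $\eta\|\Delta u_\varepsilon\|^2+\tfrac{1}{4\eta}\|u_\varepsilon\|^2$ with $\eta$ proportional to $h_0^2/\|\nabla h_\varepsilon\|_\infty^2$. The $\|\Delta u_\varepsilon\|^2$ on the right is then absorbed by the dissipation on the left, leaving
\[
\tfrac{d}{dt}\|\nabla u_\varepsilon\|_{L^2}^2+\tfrac{h_0}{2}\|\nabla^2 u_\varepsilon\|_{L^2}^2\lesssim \|\nabla h_\varepsilon\|_\infty^4\|u_\varepsilon\|_{L^2}^2,
\]
which integrates in $t$ directly to $\|\nabla u_\varepsilon(t)\|_{L^2}^2\lesssim\varepsilon^{-2N_1}+T\varepsilon^{-4N_0-2N_1}\lesssim\varepsilon^{-(4N_0+2N_1)}$, i.e. polynomial.

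For the $H^2$ bound, differentiating the equation twice in space would reintroduce $\partial^2 h_\varepsilon$, which is not controlled by the $W^{1,\infty}$-moderateness hypothesis. I would instead exploit the equation itself: rewriting $h_\varepsilon\Delta u_\varepsilon=\partial_t u_\varepsilon-\nabla h_\varepsilon\cdot\nabla u_\varepsilon$ and dividing by $h_\varepsilon\ge h_0$, one gets
\[
\|\Delta u_\varepsilon(t)\|_{L^2}\le \tfrac{1}{h_0}\bigl(\|\partial_t u_\varepsilon(t)\|_{L^2}+\|\nabla h_\varepsilon\|_\infty\|\nabla u_\varepsilon(t)\|_{L^2}\bigr),
\]
while $\|\Delta u_\varepsilon\|_{L^2}=\|\nabla^2 u_\varepsilon\|_{L^2}$ on $\R^d$. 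The remaining ingredient is $\|\partial_t u_\varepsilon(t)\|_{L^2}$: since $v_\varepsilon:=\partial_t u_\varepsilon$ solves the same divergence-form heat equation $\partial_t v_\varepsilon=L_\varepsilon v_\varepsilon$ (the coefficient is time-independent), the $L^2$ energy argument applied to $v_\varepsilon$ gives $\|\partial_t u_\varepsilon(t)\|_{L^2}\le\|\partial_t u_\varepsilon(0)\|_{L^2}=\|L_\varepsilon u_{0,\varepsilon}\|_{L^2}\lesssim\|h_\varepsilon\|_{W^{1,\infty}}\|u_{0,\varepsilon}\|_{H^2}\lesssim\varepsilon^{-N_0-N_1}$. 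Combining all three estimates yields $\sup_{t\in[0,T]}\|u_\varepsilon(t)\|_{H^2}\lesssim\varepsilon^{-N}$ for some $N\in\N_0$ depending only on $N_0,N_1,d,h_0,T$, which is exactly the $C$-moderateness required by Definition \ref{defn:very weak sol}.

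The main obstacle is the $H^1$ step: without the interpolation-plus-absorption trick described above, Gronwall produces an exponential $\exp(C\varepsilon^{-2N_0})$ that is not moderate, so this is the place where the positivity of $h_\varepsilon$ (and hence the dissipation term) must genuinely be used. Once the $H^1$ bound is polynomial, upgrading to $H^2$ through the equation is the natural way to dodge the missing $W^{2,\infty}$-control on $h_\varepsilon$.
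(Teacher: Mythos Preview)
Your proposal is correct and ultimately delivers the required $C$-moderateness, but the $H^1$ step takes a noticeably different route from the paper.

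The paper does not test the equation against $-\Delta u_\varepsilon$ at all. Instead it tests against $u_t$: an integration by parts gives
\[
\Vert u_t(t,\cdot)\Vert_{L^2}^2+\tfrac{1}{2}\,\partial_t\!\sum_{j}\Vert h_\varepsilon^{1/2}\partial_{x_j}u_\varepsilon(t,\cdot)\Vert_{L^2}^2=0,
\]
so the weighted Dirichlet energy $\sum_j\Vert h_\varepsilon^{1/2}\partial_{x_j}u_\varepsilon\Vert_{L^2}^2$ is monotone decreasing in $t$. Dividing by the uniform lower bound $h_0$ then yields $\Vert\nabla u_\varepsilon(t)\Vert_{L^2}\lesssim(1+\Vert h_\varepsilon\Vert_{L^\infty})\Vert u_{0,\varepsilon}\Vert_{H^1}$ directly, with no Gronwall, no interpolation, and no absorption step. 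This is cleaner and also gives a sharper exponent (roughly $\varepsilon^{-N_0-N_1}$ rather than your $\varepsilon^{-(2N_0+N_1)}$), although for moderateness any polynomial loss is acceptable. Your interpolation-plus-absorption argument is correct and is the standard manoeuvre one reaches for when testing against $-\Delta u$; the paper simply sidesteps the whole issue by choosing a better multiplier. The $L^2$ estimate and the $H^2$ upgrade (time-differentiate the equation, use $L^2$ decay for $\partial_t u_\varepsilon$, then recover $\Delta u_\varepsilon$ from $h_\varepsilon\Delta u_\varepsilon=\partial_t u_\varepsilon-\nabla h_\varepsilon\cdot\nabla u_\varepsilon$) are done the same way in both your proposal and the paper.
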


The following lemma is a key to the proof of the existence of a very weak solution to our model problem, Theorem \ref{TH: Exist}. It is stated in the case when $h$ is a regular function.

\begin{lem} \label{lem 1}
Let $h\in L^{\infty}\left({\mathbb{R}^d}\right)$ be positive. Assume that $u_0 \in H^{1}(\mathbb{R}^d)$.
Then, the unique solution $u\in C([0, T]; H^{1}(\mathbb R^d))$ to the Cauchy problem (\ref{Equation singular}), satisfies the estimates
\begin{equation}
    \Vert u(t,\cdot)\Vert_{L^2} \lesssim \left(1 + \Vert h\Vert_{L^{\infty}}\right)^{\frac{1}{2}} \Vert u_0\Vert_{H^1}, \label{Energy estimate}
\end{equation}
and
\begin{equation}
    \Vert \partial_{x_i}u(t,\cdot)\Vert_{L^2} \lesssim \left(1 + \Vert h\Vert_{L^{\infty}}\right) \Vert u_0\Vert_{H^1}, \label{Energy estimate 1}
\end{equation}
for all $i=1,...,d$ and $t\in [0,T]$. Moreover, if $h\in W^{1,\infty}(\mathbb{R}^d)$ and $u_0 \in H^{2}(\mathbb{R}^d)$, then, the solution $u\in C([0, T]; H^{2}(\mathbb R^d))$ and satisfies the estimate
\begin{equation}
    \Vert \Delta u(t,\cdot)\Vert_{L^2} \lesssim \left(2 + \Vert h\Vert_{W^{1,\infty}}\right)^2 \Vert u_0\Vert_{H^2}, \label{Energy estimate 2}
\end{equation}
for all $t\in [0,T]$.
\end{lem}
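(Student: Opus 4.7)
The plan is to construct the solution by classical semigroup theory and then propagate each of the three norms using the functional calculus of a single non-negative self-adjoint operator. Let $A$ denote the Friedrichs extension of the closed symmetric form
\begin{equation*}
a(u,v)=\sum_{j=1}^{d}\int_{\mathbb{R}^{d}}h(x)\,\partial_{x_{j}}u(x)\,\partial_{x_{j}}v(x)\,dx,\qquad u,v\in H^{1}(\mathbb{R}^{d}),
\end{equation*}
which is well-defined and coercive on $H^{1}$ under the standing hypothesis $0<h_{0}\leq h\in L^{\infty}$. The equation reads $u_{t}=-Au$, so $u(t)=e^{-tA}u_{0}$ is the unique $C([0,T];L^{2})$-solution; for $u_{0}\in H^{1}$ (the form domain of $A$) one gets $u\in C([0,T];H^{1})$, and for $u_{0}\in H^{2}$ together with $h\in W^{1,\infty}$ one has $u_{0}\in D(A)$, hence $u\in C([0,T];H^{2})$.

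The bound \eqref{Energy estimate} is immediate from contractivity $\|e^{-tA}\|_{L^{2}\to L^{2}}\leq 1$, which already yields $\|u(t,\cdot)\|_{L^{2}}\leq\|u_{0}\|_{L^{2}}\leq\|u_{0}\|_{H^{1}}$, a slight strengthening of the statement. For \eqref{Energy estimate 1} I test the equation by $u_{t}$ (equivalently, apply the spectral theorem to the monotone function $\lambda\mapsto\lambda$) and obtain the weighted monotonicity
\begin{equation*}
\sum_{j=1}^{d}\int_{\mathbb{R}^{d}}h(x)|\partial_{x_{j}}u(t,x)|^{2}\,dx
\leq\sum_{j=1}^{d}\int_{\mathbb{R}^{d}}h(x)|\partial_{x_{j}}u_{0}(x)|^{2}\,dx
\leq\|h\|_{L^{\infty}}\|\nabla u_{0}\|_{L^{2}}^{2}.
\end{equation*}
Bounding the left-hand side from below by $h_{0}\|\partial_{x_{i}}u(t,\cdot)\|_{L^{2}}^{2}$ and absorbing the $h_{0}^{-1/2}$ factor into the implicit constant of $\lesssim$ yields \eqref{Energy estimate 1}.

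For \eqref{Energy estimate 2} I climb one more energy level: since $A$ and $e^{-tA}$ commute, $\|Au(t,\cdot)\|_{L^{2}}\leq\|Au_{0}\|_{L^{2}}$. Now $Au_{0}=-h\Delta u_{0}-\nabla h\cdot\nabla u_{0}$, so $\|Au_{0}\|_{L^{2}}\lesssim\|h\|_{W^{1,\infty}}\|u_{0}\|_{H^{2}}$. Reading the equation pointwise as $h\Delta u=-Au-\nabla h\cdot\nabla u$, dividing by $h\geq h_{0}$ and inserting \eqref{Energy estimate 1} on the gradient term, I get
\begin{equation*}
\|\Delta u(t,\cdot)\|_{L^{2}}
\lesssim\|Au(t,\cdot)\|_{L^{2}}+\|\nabla h\|_{L^{\infty}}\|\nabla u(t,\cdot)\|_{L^{2}}
\lesssim(2+\|h\|_{W^{1,\infty}})^{2}\|u_{0}\|_{H^{2}},
\end{equation*}
which is \eqref{Energy estimate 2}. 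The main obstacle I expect is not any single computation but the rigorous justification: one must verify that $H^{2}\subset D(A)$ when $h\in W^{1,\infty}$ (so that $Au_{0}$ genuinely lies in $L^{2}$ and the integrations by parts are legitimate), and one must keep track of how the $h_{0}$-dependent factors are swept into the implicit constants. Both are handled cleanly by approximating $u_{0}$ in $H^{2}$ by Schwartz data, running the estimates for the smooth approximating solutions where every computation is classical, and passing to the limit using linearity and strong continuity of the semigroup.
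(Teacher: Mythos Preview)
Your argument is correct and, once unpacked, coincides with the paper's: testing by $u$ yields $L^{2}$-contractivity (the paper does this too, as its identity \eqref{Energy 2}, and later isolates it as Lemma~\ref{lem 2}); testing by $u_{t}$ gives the decay of the weighted Dirichlet energy $\sum_{j}\|h^{1/2}\partial_{x_{j}}u\|_{L^{2}}^{2}$, which combined with $h\geq h_{0}$ yields \eqref{Energy estimate 1}; and for \eqref{Energy estimate 2} both you and the paper use that $u_{t}=-Au$ solves the same equation with initial data $-Au_{0}\in L^{2}$, feed this through the $L^{2}$-contraction, and then read $h\Delta u$ off from the equation. The only cosmetic differences are that you package everything in the language of the Friedrichs extension and the analytic semigroup $e^{-tA}$ (which cleanly handles existence and the commutation $\|Au(t)\|_{L^{2}}\le\|Au_{0}\|_{L^{2}}$), whereas the paper works by direct energy identities; and for \eqref{Energy estimate} the paper bounds the combined energy $E(t)=\|u\|_{L^{2}}^{2}+\sum_{j}\|h^{1/2}\partial_{x_{j}}u\|_{L^{2}}^{2}$, picking up the factor $(1+\|h\|_{L^{\infty}})^{1/2}$, while your contractivity argument gives the sharper $\|u(t)\|_{L^{2}}\le\|u_{0}\|_{L^{2}}$ directly.
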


\begin{proof}
On the one hand, multiplying the equation in (\ref{Equation singular}) by $u_t$ and integrating with respect to the variable $x$ over $\mathbb{R}^d$ and taking the real part, we get
\begin{equation}
    Re \left( \langle u_{t}(t,\cdot),u_{t}(t,\cdot)\rangle_{L^2} - \sum_{j=1}^{d}\langle \partial_{x_j} (h(\cdot)\partial_{x_j}u(t,\cdot)),u_{t}(t,\cdot)\rangle_{L^2} \right) = 0, \label{Energy functional 1}
\end{equation}
where $\langle \cdot, \cdot \rangle_{L^2}$ denotes the inner product in $L^2(\mathbb{R}^{d})$. After short calculations, we easily see that
\begin{equation}
    Re \langle u_{t}(t,\cdot),u_{t}(t,\cdot)\rangle_{L^2} = \Vert u_t(t,\cdot)\Vert_{L^2}^2
\end{equation}
and
\begin{equation}
    Re \sum_{j=1}^{d}\langle \partial_{x_j} (h(\cdot)\partial_{x_j}u(t,\cdot)),u_{t}(t,\cdot)\rangle_{L^2} = -\frac{1}{2}\partial_{t}\sum_{j=1}^{d} \Vert h^{\frac{1}{2}}\partial_{x_j}u(t,\cdot)\Vert_{L^2}^2.
\end{equation}
From (\ref{Energy functional 1}), it follows that
\begin{equation}
    \Vert u_t(t,\cdot)\Vert_{L^2}^2 + \frac{1}{2}\partial_{t}\sum_{j=1}^{d} \Vert h^{\frac{1}{2}}\partial_{x_j}u(t,\cdot)\Vert_{L^2}^2 = 0. \label{Energy 1}
\end{equation}
On the other hand, if we multiply the equation in (\ref{Equation singular}) by $u$ and integrate over $\mathbb{R}^d$, we obtain
\begin{equation}
    Re \left( \langle u_{t}(t,\cdot),u(t,\cdot)\rangle_{L^2} - \sum_{j=1}^{d}\langle \partial_{x_j} (h(\cdot)\partial_{x_j}u(t,\cdot)),u(t,\cdot)\rangle_{L^2} \right) = 0. \label{Energy functional 2}
\end{equation}
Again, after short calculations we get
\begin{equation}
    Re \langle u_{t}(t,\cdot),u(t,\cdot)\rangle_{L^2} = \frac{1}{2}\partial_{t}\Vert u(t,\cdot)\Vert_{L^2}^2,
\end{equation}
and
\begin{equation}
    Re \sum_{j=1}^{d}\langle \partial_{x_j} (h(\cdot)\partial_{x_j}u(t,\cdot)),u(t,\cdot)\rangle_{L^2} = -\sum_{j=1}^{d} \Vert h^{\frac{1}{2}}\partial_{x_j}u(t,\cdot)\Vert_{L^2}^2.
\end{equation}
It follows from (\ref{Energy functional 2}) that
\begin{equation}
    \frac{1}{2}\partial_{t}\Vert u(t,\cdot)\Vert_{L^2}^2 + \sum_{j=1}^{d} \Vert h^{\frac{1}{2}}\partial_{x_j}u(t,\cdot)\Vert_{L^2}^2 = 0. \label{Energy 2}
\end{equation}
By summing (\ref{Energy 1}) and (\ref{Energy 2}), we arrive at
\begin{equation}
    \frac{1}{2}\partial_{t}\left[ \Vert u(t,\cdot)\Vert_{L^2}^2 + \sum_{j=1}^{d} \Vert h^{\frac{1}{2}}\partial_{x_j}u(t,\cdot)\Vert_{L^2}^2 \right] = - \left[ \Vert u_t(t,\cdot)\Vert_{L^2}^2 + \sum_{j=1}^{d} \Vert h^{\frac{1}{2}}\partial_{x_j}u(t,\cdot)\Vert_{L^2}^2 \right]. \label{Energy}
\end{equation}
Let us denote by
\begin{equation}
    E(t):=  \Vert u(t,\cdot)\Vert_{L^2}^2 + \sum_{j=1}^{d} \Vert h^{\frac{1}{2}}\partial_{x_j}u(t,\cdot)\Vert_{L^2}^2.
\end{equation}
The right hand side in (\ref{Energy}) is negative, so that $\partial_{t}E(t) \leq 0$ and thus $E(t) \leq E(0)$ for all $t\in [0,T]$.
It follows that
\begin{equation}
    \Vert u(t,\cdot)\Vert_{L^2}^2 \leq \Vert u_0\Vert_{L^2}^2 + \sum_{j=1}^{d} \Vert h^{\frac{1}{2}}\partial_{x_j}u_0\Vert_{L^2}^2.
\end{equation}
Noting that for all $i=1,...,d$, the term $\Vert h^{\frac{1}{2}}\partial_{x_i}u_0\Vert_{L^2}^2$ can be estimated by
\begin{equation}
    \Vert h^{\frac{1}{2}}\partial_{x_i}u_0\Vert_{L^2}^2 \leq \Vert h\Vert_{L^{\infty}}\Vert u_0\Vert_{H^1}^2,
\end{equation}
we obtain the desired estimate for $u$, that is
\begin{equation}
    \Vert u(t,\cdot)\Vert_{L^2} \lesssim \left(1 + \Vert h\Vert_{L^{\infty}}\right)^{\frac{1}{2}} \Vert u_0\Vert_{H^1}.
\end{equation}
Now, the equality (\ref{Energy 1}) implies that
\begin{equation}
     \frac{1}{2}\partial_{t}\sum_{j=1}^{d} \Vert h^{\frac{1}{2}}\partial_{x_j}u(t,\cdot)\Vert_{L^2}^2 = -\Vert u_t(t,\cdot)\Vert_{L^2}^2.
\end{equation}
As the right hand side is negative, it follows that $\sum_{j=1}^{d} \Vert h^{\frac{1}{2}}\partial_{x_j}u(t,\cdot)\Vert_{L^2}^2$ is decreasing and thus,
\begin{equation}
    \Vert h^{\frac{1}{2}}\partial_{x_i}u(t,\cdot)\Vert_{L^2} \lesssim \left(1 + \Vert h\Vert_{L^{\infty}}\right) \Vert u_0\Vert_{H^1},
\end{equation}
for all $i=1,...,d$. Then, by using the assumption that $h$ is bounded from below, we obtain our estimate for $\partial_{x_i}u$. That is
\begin{equation}
    \Vert \partial_{x_i}u(t,\cdot)\Vert_{L^2} \lesssim \left(1 + \Vert h\Vert_{L^{\infty}}\right) \Vert u_0\Vert_{H^1}, \label{Estimate dxjU}
\end{equation}
for all $i=1,...,d$ and $t\in [0,T]$. Let us now assume that $\nabla h\in L^{\infty}\left({\mathbb{R}^d}\right)$, $u_0 \in H^{2}(\mathbb{R}^d)$ and prove the estimate for $\Delta u$. On the one hand, using the equality (\ref{Energy 2}), we easily show that
\begin{equation}
    \Vert u(t,\cdot)\Vert_{L^2} \lesssim \Vert u_0\Vert_{L^2}, \label{Estimate}
\end{equation}
for all $t\in [0,T]$. On the other hand, we know that if $u$ solves the Cauchy problem
\begin{equation}
    \left\lbrace
    \begin{array}{l}
    u_{t}(t,x) - \sum_{j=1}^{d}\partial_{x_{j}}\left(h(x)\partial_{x_{j}}u(t,x)\right)=0, \,\,\,(t,x)\in\left[0,T\right]\times \mathbb{R}^{d},\\
    u(0,x)=u_{0}(x), \,\,\, x\in\mathbb{R}^{d},
    \end{array}
    \right.
\end{equation}
then $u_t$ solves
\begin{equation}
    \left\lbrace
    \begin{array}{l}
   \partial_t u_{t}(t,x) - \sum_{j=1}^{d}\partial_{x_{j}}\left(h(x)\partial_{x_{j}}u_t(t,x)\right)=0, \,\,\,(t,x)\in\left[0,T\right]\times \mathbb{R}^{d},\\
    u_t(0,x)=\sum_{j=1}^{d}\partial_{x_{j}}\left(h(x)\partial_{x_{j}} u_0(x)\right), \,\,\, x\in\mathbb{R}^{d}.
    \end{array}
    \right.
\end{equation}
Using (\ref{Estimate}), we get
\begin{align}
    \Vert u_t\Vert_{L^2} & \lesssim \sum_{j=1}^{d}\Vert \partial_{x_{j}}\left(h(\cdot)\partial_{x_{j}} u_0(\cdot)\right)\Vert_{L^2}\nonumber \\
    & \lesssim \sum_{j=1}^{d}\Vert \partial_{x_{j}} h(\cdot)\partial_{x_{j}} u_0(\cdot)\Vert_{L^2} + \sum_{j=1}^{d}\Vert h(\cdot)\partial_{x_{j}}^{2} u_0(\cdot)\Vert_{L^2}\nonumber \\ 
    & \lesssim \Vert \nabla h\Vert_{L^{\infty}}\Vert u_0\Vert_{H^1} + \Vert h\Vert_{L^{\infty}}\Vert u_0\Vert_{H^2}. \label{Estimate u_t}
\end{align}
The estimate for $\Delta u$ follows by taking the $L^2$ norm in the equality
\begin{equation}
    \sum_{j=1}^{d}h(x)\partial_{x_{j}}^{2}u(t,x) = u_{t}(t,x) + \sum_{j=1}^{d}\partial_{x_j}h(x)\partial_{x_{j}}u(t,x),
\end{equation}
obtained from the equation in (\ref{Equation singular}), and by using the so far proved estimates \eqref{Estimate dxjU} and \eqref{Estimate u_t} for $\partial_{x_j}u$ and $u_t$ and taking into consideration that $h$ is bounded from below, 
that is,
\begin{equation*}
    \Vert h(\cdot)\Delta u\Vert_{L^2} \lesssim \Vert u_t(t,\cdot)\Vert_{L^2} + \Vert \sum_{j=1}^{d}\partial_{x_j}h(\cdot)\partial_{x_{j}}u(t,\cdot)\Vert_{L^2}. 
\end{equation*}
The first term in the right hand side is estimated by $\Vert h\Vert_{W^{1,\infty}}\Vert u_0\Vert_{H^2}$ and the second term can be estimated as follows
\begin{align*}
    \Vert \sum_{j=1}^{d}\partial_{x_j}h(x)\partial_{x_{j}}u(t,x)\Vert_{L^2} & \lesssim \sum_{j=1}^{d} \Vert \partial_{x_j}h(\cdot)\Vert_{L^{\infty}}\Vert \partial_{x_{j}}u(t,\cdot)\Vert_{L^2}\\
    & \lesssim \Vert h\Vert_{W^{1,\infty}}\sum_{j=1}^{d}\Vert \partial_{x_{j}}u(t,\cdot)\Vert_{L^2}.
\end{align*}
From \eqref{Estimate dxjU}, using that for all $j=1,...,d$ and $t\in [0,T]$,
\begin{equation*}
    \Vert \partial_{x_j}u(t,\cdot)\Vert_{L^2} \lesssim \left(1 + \Vert h\Vert_{L^{\infty}}\right) \Vert u_0\Vert_{H^1} \lesssim \left(1 + \Vert h\Vert_{W^{1,\infty}}\right) \Vert u_0\Vert_{H^2},
\end{equation*}
we get that
\begin{align*}
    \Vert h(\cdot)\Delta u\Vert_{L^2} \lesssim \Vert h\Vert_{W^{1,\infty}}\left(2+\Vert h\Vert_{W^{1,\infty}}\right)\Vert u_0\Vert_{H^2}.
\end{align*}
The desired estimate follows from the estimate
\begin{equation*}
    \Vert h\Vert_{W^{1,\infty}}\left(2+\Vert h\Vert_{W^{1,\infty}}\right) \leq \left(2+\Vert h\Vert_{W^{1,\infty}}\right)^2
\end{equation*}
and the assumption that $h$ is bounded from below.
This ends the proof of the lemma.
\end{proof}

\begin{proof}[Proof of Theorem \ref{TH: Exist}]
Using the energy estimates (\ref{Energy estimate}), (\ref{Energy estimate 1}), (\ref{Energy estimate 2}) and the moderateness assumptions (\ref{assum coeff}) and (\ref{assum data}), of the coefficient $h$ and the data $u_0$, we arrive at
\begin{equation}
    \Vert u_{\varepsilon}(t,\cdot)\Vert_{H^2} \lesssim \varepsilon^{-N_0 - N_1},
\end{equation}
for all $t\in[0,T]$. That means that $(u_{\varepsilon})_{\varepsilon}$, the net of solutions to the regularized Cauchy problem (\ref{Equation regularized}) is $C$-moderate and the existence of a very weak solution follows.
\end{proof}

In the next theorems, we want to prove the uniqueness of the very weak solution and a consistency result. To do, we recall the following estimate which was implicitly proved in Lemma \ref{lem 1}.

\begin{lem} \label{lem 2}
Let $h\in L^{\infty}\left({\mathbb{R}^d}\right)$ be positive. Assume that $u_0 \in H^{1}(\mathbb{R}^d)$.
Then, the estimate
\begin{equation}
    \Vert u(t,\cdot)\Vert_{L^2} \lesssim \Vert u_0\Vert_{L^2}, \label{Energy estimate 3}
\end{equation}
for all $t\in [0,T]$, holds for the unique solution $u\in C([0, T]; L^{2}(\mathbb R^d))$ to the Cauchy problem (\ref{Equation singular}).
\end{lem}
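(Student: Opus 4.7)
The plan is to observe that the estimate has already been established implicitly inside Lemma \ref{lem 1}, so the proof only amounts to isolating the relevant computation. Concretely, I would multiply the equation in (\ref{Equation singular}) by $u(t,\cdot)$, integrate over $\mathbb{R}^d$, take the real part, and integrate by parts on the divergence-form term. This is exactly the calculation that produced (\ref{Energy 2}):
\[
\tfrac{1}{2}\partial_t \Vert u(t,\cdot)\Vert_{L^2}^2 + \sum_{j=1}^d \Vert h^{1/2}\partial_{x_j} u(t,\cdot)\Vert_{L^2}^2 = 0.
\]

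The key observation is then that positivity of $h$ forces the dissipative sum on the left to be non-negative, so $\partial_t \Vert u(t,\cdot)\Vert_{L^2}^2 \leq 0$; that is, $t \mapsto \Vert u(t,\cdot)\Vert_{L^2}^2$ is non-increasing. Integrating in time from $0$ to $t$ immediately yields
\[
\Vert u(t,\cdot)\Vert_{L^2}^2 \leq \Vert u_0\Vert_{L^2}^2,
\]
which is (\ref{Energy estimate 3}). The test against $u$ and the integration by parts are legitimate because, under the hypotheses stated, Lemma \ref{lem 1} already guarantees $u \in C([0,T]; H^1(\mathbb{R}^d))$, which is more than enough regularity to justify each manipulation above.

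There is no real technical obstacle here. The only conceptual point worth emphasising — and, I suspect, the reason this estimate is singled out separately from (\ref{Energy estimate}) of Lemma \ref{lem 1} — is that no upper bound on $h$ is used at any stage of the argument: only the positivity of $h$ enters, so the constant on the right-hand side depends on $u_0$ alone and is in particular independent of any norm of $h$. This is precisely the feature that will be needed to compare two nets of very weak solutions uniformly in the regularisation parameter in the upcoming uniqueness argument.
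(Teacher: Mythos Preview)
Your proposal is correct and follows exactly the same route as the paper: multiply by $u$, integrate, take real parts, and use positivity of $h$ to conclude that $\Vert u(t,\cdot)\Vert_{L^2}^2$ is non-increasing. Your additional remark that the resulting constant is independent of any norm of $h$ is apt and indeed explains why this estimate is isolated for later use in the uniqueness proof.
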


\begin{proof}
Multiplying the equation in (\ref{Equation singular}) by $u$, integrating over $\mathbb{R}^d$ and taking the real part, we get
\begin{equation}
    Re \left( \langle u_{t}(t,\cdot),u(t,\cdot)\rangle_{L^2} - \sum_{j=1}^{d}\langle \partial_{x_j} (h(\cdot)\partial_{x_j}u(t,\cdot)),u(t,\cdot)\rangle_{L^2} \right) = 0. \label{Energy functional 3}
\end{equation}
We have that
\begin{equation}
    Re \langle u_{t}(t,\cdot),u(t,\cdot)\rangle_{L^2} = \frac{1}{2}\partial_{t}\Vert u(t,\cdot)\Vert_{L^2}^2,
\end{equation}
and
\begin{equation}
    Re \sum_{j=1}^{d}\langle \partial_{x_j} (h(\cdot)\partial_{x_j}u(t,\cdot)),u(t,\cdot)\rangle_{L^2} = -\sum_{j=1}^{d} \Vert h^{\frac{1}{2}}\partial_{x_j}u(t,\cdot)\Vert_{L^2}^2.
\end{equation}
From (\ref{Energy functional 3}), it follows that
\begin{equation}
    \frac{1}{2}\partial_{t}\Vert u(t,\cdot)\Vert_{L^2}^2 = -\sum_{j=1}^{d} \Vert h^{\frac{1}{2}}\partial_{x_j}u(t,\cdot)\Vert_{L^2}^2. \label{Energy 3}
\end{equation}
The right hand side in (\ref{Energy 3}) in negative, which means that $\Vert u(t,\cdot)\Vert_{L^2}^2$ is decreasing in time. This concludes the proof of the lemma.
\end{proof}

\subsection{Uniqueness of very weak solutions}
We prove the uniqueness of the very weak solution to the Cauchy problem (\ref{Equation singular}) in the sense of the following stability definition.

\begin{defn}[Uniqueness]
We say that the Cauchy problem (\ref{Equation singular}) has a unique very weak solution, if for all families of regularisations $(h_{\varepsilon})_{\varepsilon}$, $(\Tilde{h}_{\varepsilon})_{\varepsilon}$ and $(u_{0,\varepsilon})_{\varepsilon}$, $(\Tilde{u}_{0,\varepsilon})_{\varepsilon}$ of the coefficient $h$ and the Cauchy data $u_0$, satisfying
\begin{equation}
    \Vert h_{\varepsilon}-\Tilde{h}_{\varepsilon}\Vert_{W^{1,\infty}}\leq C_{k}\varepsilon^{k} \text{\,\,for all\,\,} k>0,
\end{equation}
and
\begin{equation}
    \Vert u_{0,\varepsilon}-\Tilde{u}_{0,\varepsilon}\Vert_{L^{2}}\leq C_{m}\varepsilon^{m} \text{\,\,for all\,\,} m>0,
\end{equation}
we have
\begin{equation*}
    \Vert u_{\varepsilon}(t,\cdot)-\Tilde{u}_{\varepsilon}(t,\cdot)\Vert_{L^{2}} \leq C_{N}\varepsilon^{N}
\end{equation*}
for all $N>0$,  
where $(u_{\varepsilon})_{\varepsilon}$ and $(\Tilde{u}_{\varepsilon})_{\varepsilon}$ are the families of solutions to the related regularized Cauchy problems.
\end{defn}

\begin{thm}[Uniqueness] \label{thm uniqueness}
Let $T>0$. Assume that the regularisations of the coefficient $h$ and the Cauchy data $u_0$ satisfy Assumption \ref{assumption}. Then, the very weak solution to the Cauchy problem (\ref{Equation singular}) is unique.
\end{thm}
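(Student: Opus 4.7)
The plan is to set $U_\varepsilon := u_\varepsilon - \Tilde{u}_\varepsilon$ and derive an inhomogeneous equation of the same type as \eqref{Equation regularized}, then apply an energy estimate in the spirit of Lemma \ref{lem 2} to control $\|U_\varepsilon(t,\cdot)\|_{L^2}$ by the initial difference and by the source, both of which will turn out to be negligible.

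First, I would subtract the regularized equations satisfied by $u_\varepsilon$ and $\Tilde{u}_\varepsilon$, which gives
\begin{equation*}
    \partial_t U_\varepsilon(t,x) - \sum_{j=1}^{d}\partial_{x_j}\bigl(h_\varepsilon(x)\partial_{x_j} U_\varepsilon(t,x)\bigr) = F_\varepsilon(t,x),
\end{equation*}
\begin{equation*}
    U_\varepsilon(0,x) = u_{0,\varepsilon}(x) - \Tilde{u}_{0,\varepsilon}(x),
\end{equation*}
where the forcing term is
\begin{equation*}
    F_\varepsilon := \sum_{j=1}^{d}\partial_{x_j}\bigl((h_\varepsilon-\Tilde{h}_\varepsilon)\,\partial_{x_j}\Tilde{u}_\varepsilon\bigr) = \sum_{j=1}^{d} \partial_{x_j}(h_\varepsilon-\Tilde{h}_\varepsilon)\,\partial_{x_j}\Tilde{u}_\varepsilon + (h_\varepsilon-\Tilde{h}_\varepsilon)\,\partial_{x_j}^{2}\Tilde{u}_\varepsilon.
\end{equation*}
Then I would pair with $U_\varepsilon$ in $L^2$, take the real part, and use exactly the identities from the proof of Lemma \ref{lem 2} to obtain
\begin{equation*}
    \tfrac{1}{2}\partial_t \|U_\varepsilon(t,\cdot)\|_{L^2}^{2} + \sum_{j=1}^{d}\|h_\varepsilon^{1/2}\partial_{x_j}U_\varepsilon(t,\cdot)\|_{L^2}^{2} = \mathrm{Re}\langle F_\varepsilon(t,\cdot), U_\varepsilon(t,\cdot)\rangle_{L^2}.
\end{equation*}
Discarding the nonnegative dissipation term and applying Cauchy--Schwarz yields $\partial_t \|U_\varepsilon(t,\cdot)\|_{L^2} \leq \|F_\varepsilon(t,\cdot)\|_{L^2}$, hence
\begin{equation*}
    \|U_\varepsilon(t,\cdot)\|_{L^2} \leq \|u_{0,\varepsilon}-\Tilde{u}_{0,\varepsilon}\|_{L^2} + \int_{0}^{t}\|F_\varepsilon(s,\cdot)\|_{L^2}\,ds.
\end{equation*}

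The remaining work is to show that the right-hand side is $O(\varepsilon^N)$ for every $N>0$. The first term is controlled directly by the hypothesis $\|u_{0,\varepsilon}-\Tilde{u}_{0,\varepsilon}\|_{L^2}\leq C_m\varepsilon^m$. For the source, the expansion above gives
\begin{equation*}
    \|F_\varepsilon(s,\cdot)\|_{L^2} \lesssim \|h_\varepsilon-\Tilde{h}_\varepsilon\|_{W^{1,\infty}}\,\|\Tilde{u}_\varepsilon(s,\cdot)\|_{H^2}.
\end{equation*}
By hypothesis the first factor is $\leq C_k\varepsilon^k$ for every $k>0$, while by the $C$-moderateness in Definition \ref{defn:very weak sol} (which is guaranteed for $\Tilde{u}_\varepsilon$ by Theorem \ref{TH: Exist} together with Lemma \ref{lem 1}) the second factor is bounded by $\varepsilon^{-N'}$ for some fixed $N'\in\mathbb{N}_0$. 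Choosing $k$ and $m$ large enough that $k-N'$ and $m$ both exceed the prescribed $N$, we conclude $\|U_\varepsilon(t,\cdot)\|_{L^2}\leq C_N\varepsilon^N$ uniformly on $[0,T]$, which is the required uniqueness.

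The main obstacle is the presence of the factor $\partial_{x_j}^{2}\Tilde{u}_\varepsilon$ in $F_\varepsilon$, which forces us to consume two spatial derivatives of $\Tilde{u}_\varepsilon$. This is precisely why the very weak solution is required to live in $H^{2}$ with $C$-moderate bounds, and why Assumption \ref{assumption} asks for $W^{1,\infty}$-moderateness of $h_\varepsilon$ and $H^{2}$-moderateness of $u_{0,\varepsilon}$; the two scales of smallness, $\varepsilon^k$ in the data and $\varepsilon^{-N'}$ in the solution, exactly cancel because $k$ can be taken arbitrarily large.
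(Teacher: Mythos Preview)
Your argument is correct and follows the same overall strategy as the paper: derive an inhomogeneous equation for $U_\varepsilon=u_\varepsilon-\tilde u_\varepsilon$, bound $\|U_\varepsilon\|_{L^2}$ by the initial difference plus the time integral of $\|F_\varepsilon\|_{L^2}$, and then use the negligibility of $h_\varepsilon-\tilde h_\varepsilon$ against the $C$-moderateness of the very weak solution. The only differences are cosmetic. First, you write the equation for $U_\varepsilon$ with coefficient $h_\varepsilon$ and a forcing involving $\tilde u_\varepsilon$, whereas the paper makes the symmetric choice (coefficient $\tilde h_\varepsilon$, forcing involving $u_\varepsilon$); either works since both nets are $C$-moderate. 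Second, you obtain the $L^2$ bound by a direct energy inequality (pairing with $U_\varepsilon$ and dropping the dissipative term), while the paper invokes Duhamel's principle together with Lemma~\ref{lem 2}; your route is slightly more elementary and avoids the auxiliary decomposition $U_\varepsilon=V_\varepsilon+\int_0^t W_\varepsilon$, but both lead to the identical inequality $\|U_\varepsilon(t,\cdot)\|_{L^2}\lesssim \|u_{0,\varepsilon}-\tilde u_{0,\varepsilon}\|_{L^2}+\int_0^T\|F_\varepsilon(s,\cdot)\|_{L^2}\,ds$.
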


\begin{proof}
Let $(h_{\varepsilon}, u_{0,\varepsilon})_{\varepsilon}$, $(\Tilde{h}_{\varepsilon}, \Tilde{u}_{0,\varepsilon})_{\varepsilon}$ be regularisations of the coefficient $h$ and the Cauchy data $u_0$ and let assume that
\begin{equation*}
    \Vert h_{\varepsilon}-\Tilde{h}_{\varepsilon}\Vert_{W^{1,\infty}}\leq C_{k}\varepsilon^{k} \text{\,\,for all\,\,} k>0,
\end{equation*}
and
\begin{equation*}
    \Vert u_{0,\varepsilon}-\Tilde{u}_{0,\varepsilon}\Vert_{L^{2}}\leq C_{m}\varepsilon^{m} \text{\,\,for all\,\,} m>0.
\end{equation*}
Let us denote by $U_{\varepsilon}(t,x):=u_{\varepsilon}(t,x)-\Tilde{u}_{\varepsilon}(t,x)$, where $(u_{\varepsilon})_{\varepsilon}$ and $(\Tilde{u}_{\varepsilon})_{\varepsilon}$ are the solutions to the families of regularized Cauchy problems, related to the families $(h_{\varepsilon}, u_{0,\varepsilon})_{\varepsilon}$ and $(\Tilde{h}_{\varepsilon}, \Tilde{u}_{0,\varepsilon})_{\varepsilon}$. Easy calculations show that $U_{\varepsilon}$ solves the Cauchy problem
\begin{equation}
    \left\lbrace
    \begin{array}{l}
    \partial_{t}U_{\varepsilon}(t,x) - \sum_{j=1}^{d}\partial_{x_{j}}\left(\Tilde{h}_{\varepsilon}(x)\partial_{x_{j}}U_{\varepsilon}(t,x)\right) = f_{\varepsilon}(t,x), \,\,\,(t,x)\in\left[0,T\right]\times \mathbb{R}^{d},\\
    U_{\varepsilon}(0,x)=(u_{0,\varepsilon}-\Tilde{u}_{0,\varepsilon})(x), \,\,\, x\in\mathbb{R}^{d},
    \end{array}
    \right.
\end{equation}
where
\begin{equation}
    f_{\varepsilon}(t,x) = \sum_{j=1}^{d}\partial_{x_{j}}\left[\left(h_{\varepsilon}(x)-\Tilde{h}_{\varepsilon}(x)\right)\partial_{x_{j}}u_{\varepsilon}(t,x)\right].
\end{equation}
By Duhamel's principle (see, e.g. \cite{ER18}), we obtain the following representation
\begin{equation}
    U_{\varepsilon}(x,t)=V_{\varepsilon}(x,t) + \int_{0}^{t}W_{\varepsilon}(x,t-s;s)ds \label{Representation U_eps}
\end{equation}
for $U_{\varepsilon}$, where $V_{\varepsilon}(x,t)$ is the solution to the homogeneous problem
\begin{equation}
    \left\lbrace
    \begin{array}{l}
    \partial_{t}V_{\varepsilon}(t,x) - \sum_{j=1}^{d}\partial_{x_{j}}\left(\Tilde{h}_{\varepsilon}(x)\partial_{x_{j}}V_{\varepsilon}(t,x)\right) = 0, \,\,\,(t,x)\in\left[0,T\right]\times \mathbb{R}^{d},\\
    V_{\varepsilon}(0,x)=(u_{0,\varepsilon}-\Tilde{u}_{0,\varepsilon})(x), \,\,\, x\in\mathbb{R}^{d},
    \end{array}
    \right.
\end{equation}
and $W_{\varepsilon}(x,t;s)$ solves
\begin{equation}
    \left\lbrace
    \begin{array}{l}
    \partial_{t}W_{\varepsilon}(t,x;s) - \sum_{j=1}^{d}\partial_{x_{j}}\left(\Tilde{h}_{\varepsilon}(x)\partial_{x_{j}}W_{\varepsilon}(t,x;s)\right) = 0, \,\,\,(t,x)\in\left[0,T\right]\times \mathbb{R}^{d},\\
    W_{\varepsilon}(0,x;s)=f_{\varepsilon}(s,x), \,\,\, x\in\mathbb{R}^{d}.
    \end{array}
    \right.
\end{equation}
Taking the $L^2$ norm in both sides of (\ref{Representation U_eps}) and using (\ref{Energy estimate 3}) to estimate $V_{\varepsilon}$ and $W_{\varepsilon}$, we obtain
\begin{align}
    \Vert U_{\varepsilon}(\cdot,t)\Vert_{L^2} & \leq \Vert V_{\varepsilon}(\cdot,t)\Vert_{L^2} + \int_{0}^{T}\Vert W_{\varepsilon}(\cdot,t-s;s)\Vert_{L^2} ds \nonumber\\
    & \lesssim \Vert u_{0,\varepsilon}-\Tilde{u}_{0,\varepsilon}\Vert_{L^2} + \int_{0}^{T}\Vert f_{\varepsilon}(s,\cdot)\Vert_{L^2} ds. \label{Estimate U_eps}
\end{align}
Let us estimate $\Vert f_{\varepsilon}(s,\cdot)\Vert_{L^2}$. We have
\begin{align*}
    \Vert f_{\varepsilon}(s,\cdot)\Vert_{L^2} & = \Vert\sum_{j=1}^{d}\partial_{x_{j}}\left[\left(h_{\varepsilon}(\cdot)-\Tilde{h}_{\varepsilon}(\cdot)\right)\partial_{x_{j}}u_{\varepsilon}(s,\cdot)\right]\Vert_{L^2}\\
    & \leq \sum_{j=1}^{d} \Vert\partial_{x_{j}} h_{\varepsilon}-\partial_{x_{j}}\Tilde{h}_{\varepsilon}\Vert_{L^{\infty}} \Vert\partial_{x_j}u_{\varepsilon}\Vert_{L^2} + \Vert h_{\varepsilon}-\Tilde{h}_{\varepsilon}\Vert_{L^{\infty}}\Vert\sum_{j=1}^{d} \partial_{x_j}^{2}u_{\varepsilon}\Vert_{L^2}.
\end{align*}
In the last inequality, we used the product rule for derivatives and the fact that $\Vert\partial_{x_{j}}\left(h_{\varepsilon}-\Tilde{h}_{\varepsilon}\right)\partial_{x_j}u_{\varepsilon}\Vert_{L^2}$ and $\Vert\left(h_{\varepsilon}-\Tilde{h}_{\varepsilon}\right)\partial_{x_j}^{2}u_{\varepsilon}\Vert_{L^2}$ can be estimated by\\
$\Vert\partial_{x_{j}} h_{\varepsilon}-\partial_{x_{j}}\Tilde{h}_{\varepsilon}\Vert_{L^{\infty}} \Vert\partial_{x_j}u_{\varepsilon}\Vert_{L^2}$ and $\Vert h_{\varepsilon}-\Tilde{h}_{\varepsilon}\Vert_{L^{\infty}} \Vert\partial_{x_j}^{2}u_{\varepsilon}\Vert_{L^2}$, respectively, for all $j=1,...d$. On the one hand, we have by assumption that
\begin{equation*}
    \Vert h_{\varepsilon}-\Tilde{h}_{\varepsilon}\Vert_{W^{1,\infty}}\leq C_{k}\varepsilon^{k} \text{\,\,for all\,\,} k>0,
\end{equation*}
and
\begin{equation*}
    \Vert u_{0,\varepsilon}-\Tilde{u}_{0,\varepsilon}\Vert_{L^{2}}\leq C_{m}\varepsilon^{m} \text{\,\,for all\,\,} m>0,
\end{equation*}
On the other hand, the net $(u_{\varepsilon})_{\varepsilon}$ is $C$-moderate as a very weak solution to the Cauchy problem (\ref{Equation singular}). Then, there exists $N\in \mathbb{N}$ such that
\begin{equation*}
    \Vert\partial_{x_i}u_{\varepsilon}\Vert_{L^2} \lesssim \varepsilon^{-N},
\end{equation*}
for all $i=1,...,d$, and
\begin{equation*}
    \Vert\sum_{j=1}^{d} \partial_{x_j}^{2}u_{\varepsilon}\Vert_{L^2} \lesssim \varepsilon^{-N}.
\end{equation*}
It follows that
\begin{equation*}
    \Vert U(\cdot,t)\Vert_{L^2} \lesssim \varepsilon^{n},
\end{equation*}
for all $n\in \mathbb{N}$ and the uniqueness is proved.
\end{proof}

\subsection{Consistency with the classical solution}
We conclude this paper by showing that in the case when the coefficient $h$ and the Cauchy data $u_0$ are regular enough in such way that a classical solution exists, then the very weak solution converges to the classical one in an appropriate norm.

\begin{thm}[Consistency] \label{thm consistency}
Let $h\in L^{\infty}(\mathbb{R}^d)$ satisfy $\inf_{x\in\mathbb{R}^d}h(x)>0$ and $\nabla h\in L^{\infty}(\mathbb{R}^d)$. Assume that $u_0 \in H^2(\mathbb{R}^d)$ and let us consider the Cauchy problem
\begin{equation}
    \left\lbrace
    \begin{array}{l}
    u_{t}(t,x) - \sum_{j=1}^{d}\partial_{x_{j}}\left(h(x)\partial_{x_{j}}u(t,x)\right)=0, \,\,\,(t,x)\in\left[0,T\right]\times \mathbb{R}^{d},\\
    u(0,x)=u_{0}(x), \,\,\, x\in\mathbb{R}^{d}. \label{Equation consistency}
    \end{array}
    \right.
\end{equation}
Let $(u_{\varepsilon})_{\varepsilon}$ be a very weak solution of (\ref{Equation consistency}). Then, for any regularising families $(h_{\varepsilon})_{\varepsilon}=(h\ast\psi_{\varepsilon})_{\varepsilon}$ and $(u_{0,\varepsilon})_{\varepsilon}=(u_{0}\ast\psi_{\varepsilon})_{\varepsilon}$ for any $\psi\in C_{0}^{\infty}$, $\psi\geq 0$, $\int\psi =1$, such that
\begin{equation*}
    \Vert h_{\varepsilon} - h\Vert_{W^{1,\infty}} \rightarrow 0,
\end{equation*}
the net $(u_{\varepsilon})_{\varepsilon}$ converges to the classical solution of the Cauchy problem (\ref{Equation consistency}) in $L^{2}$ as $\varepsilon \rightarrow 0$.
\end{thm}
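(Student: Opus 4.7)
Denote by $u$ the classical solution of \eqref{Equation consistency} (which exists in $C([0,T];H^{2})$ by Lemma \ref{lem 1}, since $h\in W^{1,\infty}$ and $u_{0}\in H^{2}$) and let $(u_{\varepsilon})_{\varepsilon}$ be a very weak solution, so $u_{\varepsilon}$ solves \eqref{Equation regularized} with $h_{\varepsilon}=h\ast\psi_{\varepsilon}$ and $u_{0,\varepsilon}=u_{0}\ast\psi_{\varepsilon}$. I would set $U_{\varepsilon}:=u_{\varepsilon}-u$ and compute directly that $U_{\varepsilon}$ satisfies an inhomogeneous divergence-form heat equation with coefficient $h_{\varepsilon}$, initial data $u_{0,\varepsilon}-u_{0}$, and source
\begin{equation*}
f_{\varepsilon}(t,x)=\sum_{j=1}^{d}\partial_{x_{j}}\bigl[(h_{\varepsilon}(x)-h(x))\,\partial_{x_{j}}u(t,x)\bigr].
\end{equation*}
The key observation is that the classical solution $u$ carries the ``defect'' forcing, exactly as in the uniqueness argument the very weak solution $u_{\varepsilon}$ did.

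Next, following the Duhamel decomposition already used in the proof of Theorem \ref{thm uniqueness}, I would split $U_{\varepsilon}=V_{\varepsilon}+\int_{0}^{t}W_{\varepsilon}(\cdot,t-s;s)\,ds$, where $V_{\varepsilon}$ is the homogeneous evolution with coefficient $h_{\varepsilon}$ and initial datum $u_{0,\varepsilon}-u_{0}$, and $W_{\varepsilon}(\cdot,\cdot;s)$ is the homogeneous evolution with initial datum $f_{\varepsilon}(s,\cdot)$. Since $h_{\varepsilon}$ is positive (with $h_{\varepsilon}\geq h_{0}>0$ because $\psi_{\varepsilon}\geq 0$ has integral one) and bounded, Lemma \ref{lem 2} is applicable to both pieces and yields
\begin{equation*}
\|U_{\varepsilon}(t,\cdot)\|_{L^{2}}\lesssim \|u_{0,\varepsilon}-u_{0}\|_{L^{2}}+\int_{0}^{T}\|f_{\varepsilon}(s,\cdot)\|_{L^{2}}\,ds.
\end{equation*}

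It then remains to show that both terms on the right vanish as $\varepsilon\to 0$. The first is standard: $u_{0}\in L^{2}$ and $u_{0,\varepsilon}=u_{0}\ast\psi_{\varepsilon}$, so $\|u_{0,\varepsilon}-u_{0}\|_{L^{2}}\to 0$. For the source, I would expand the divergence using the product rule and bound
\begin{equation*}
\|f_{\varepsilon}(s,\cdot)\|_{L^{2}}\lesssim \|h_{\varepsilon}-h\|_{W^{1,\infty}}\bigl(\|\nabla u(s,\cdot)\|_{L^{2}}+\|\Delta u(s,\cdot)\|_{L^{2}}\bigr),
\end{equation*}
and the parenthesis is finite and $s$-uniformly controlled by $\|u_{0}\|_{H^{2}}$ thanks to the estimates \eqref{Energy estimate 1} and \eqref{Energy estimate 2} in Lemma \ref{lem 1} applied to $u$. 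The hypothesis $\|h_{\varepsilon}-h\|_{W^{1,\infty}}\to 0$ is precisely what makes this term tend to zero uniformly in $s\in[0,T]$, so the time integral vanishes in the limit.

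The main conceptual step is identifying the right PDE for the error $U_{\varepsilon}$ and recognising that the ``rough'' object in the source is $h_{\varepsilon}-h$ (which is now assumed small in $W^{1,\infty}$) rather than the potentially large derivatives of $u_{\varepsilon}$; putting the classical solution in the source term avoids any appeal to $C$-moderateness of $(u_{\varepsilon})_{\varepsilon}$ and reduces everything to the stability estimate of Lemma \ref{lem 2} together with elementary mollifier convergence.
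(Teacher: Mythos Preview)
Your proposal is correct and follows essentially the same route as the paper: set $U_{\varepsilon}=u_{\varepsilon}-u$, derive the inhomogeneous equation with coefficient $h_{\varepsilon}$ and source $\sum_{j}\partial_{x_{j}}[(h_{\varepsilon}-h)\partial_{x_{j}}u]$, apply the Duhamel decomposition together with Lemma~\ref{lem 2}, and conclude via $\|h_{\varepsilon}-h\|_{W^{1,\infty}}\to 0$, $\|u_{0,\varepsilon}-u_{0}\|_{L^{2}}\to 0$, and the $H^{2}$-bounds on the classical solution $u$. Your closing remark that placing the classical solution in the source is the key structural choice (so that no moderateness of $(u_{\varepsilon})_{\varepsilon}$ is needed) is exactly the point.
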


\begin{proof}
Let $u$ be the classical solution. It solves
\begin{equation}
    \left\lbrace
    \begin{array}{l}
    u_{t}(t,x) - \sum_{j=1}^{d}\partial_{x_{j}}\left(h(x)\partial_{x_{j}}u(t,x)\right)=0, \,\,\,(t,x)\in\left[0,T\right]\times \mathbb{R}^{d},\\
    u(0,x)=u_{0}(x), \,\,\, x\in\mathbb{R}^{d},
    \end{array}
    \right.
\end{equation}
and let $(u_{\varepsilon})_{\varepsilon}$ be the very weak solution. It satisfies
\begin{equation}
    \left\lbrace
    \begin{array}{l}
    \partial_{t}u_{\varepsilon}(t,x) - \sum_{j=1}^{d}\partial_{x_{j}}\left(h_{\varepsilon}(x)\partial_{x_{j}}u_{\varepsilon}(t,x)\right)=0, \,\,\,(t,x)\in\left[0,T\right]\times \mathbb{R}^{d},\\
    u_{\varepsilon}(0,x)=u_{0,\varepsilon}(x), \,\,\, x\in\mathbb{R}^{d}.
    \end{array}
    \right.
\end{equation}
Let us denote by $V_{\varepsilon}(t,x):=u_{\varepsilon}(t,x)-u(t,x)$. Then $V_{\varepsilon}$ solves the problem
\begin{equation}
    \left\lbrace
    \begin{array}{l}
    \partial_{t}V_{\varepsilon}(t,x) - \sum_{j=1}^{d}\partial_{x_{j}}\left(h_{\varepsilon}(x)\partial_{x_{j}}V_{\varepsilon}(t,x)\right) = \eta_{\varepsilon}(t,x), \,\,\,(t,x)\in\left[0,T\right]\times \mathbb{R}^{d},\\
    V_{\varepsilon}(0,x)=(u_{0,\varepsilon}-u_{0})(x), \,\,\, x\in\mathbb{R}^{d},
    \end{array}
    \right.
\end{equation}
where
\begin{equation}
    \eta_{\varepsilon}(t,x):= \sum_{j=1}^{d}\partial_{x_{j}}\left[\left(h_{\varepsilon}(x)-h(x)\right)\partial_{x_{j}}u(t,x)\right].
\end{equation}
Arguing as in the proof of Theorem \ref{thm uniqueness}, we get the following estimate
\begin{align*}
    \Vert V_{\varepsilon}(\cdot,t)\Vert_{L^2} \lesssim \Vert u_{0,\varepsilon}-u_{0}\Vert_{L^2} & +  \sum_{j=1}^{d} \Vert\partial_{x_{j}} h_{\varepsilon}-\partial_{x_{j}}h\Vert_{L^{\infty}} \int_{0}^{T}\Vert\partial_{x_j}u(s,\cdot)\Vert_{L^2} ds\\
    & + \Vert h_{\varepsilon}-h\Vert_{L^{\infty}}\int_{0}^{T} \Vert\Delta_{x} u(s,\cdot)\Vert_{L^2} ds.
\end{align*}
We have that $\Vert h_{\varepsilon} - h\Vert_{W^{1,\infty}} \rightarrow 0$ and $\Vert u_{0,\varepsilon}-u_{0}\Vert_{L^2} \rightarrow 0$ as $\varepsilon\rightarrow 0$. On the other hand, $\partial_{x_j}u$, for all $j=1,...,d$ and $\Delta_{x} u$ are bounded in $L^2$ since $u$ is a classical solution. It follows that $(u_{\varepsilon})_{\varepsilon}$ converges in $L^2$ to the classical solution.
\end{proof}

\end{document}